\numberwithin{equation}{section}
                        \theoremstyle{plain}
\newcommand{\psdraw}[2]
         {\begin{array}{c} \hspace{-1.3mm}
         \raisebox{-4pt}{\psfig{figure=#1.eps,width=#2}}
         \hspace{-1.9mm}\end{array}}
\numberwithin{equation}{section}
\newtheorem{theorem}{Theorem}[section]
\newtheorem{lemma}[theorem]{Lemma}
\newtheorem{proposition}[theorem]{Proposition}
\newtheorem{thm}{Theorem}
\theoremstyle{definition}
\newtheorem{rmk}{Remark}
\def\BC{\mathbb C}
\def\BZ{\mathbb Z}
\DeclareMathOperator{\tr}{\mathrm tr}
\def\la{\langle}
\def\ra{\rangle}
\begin{document}

\title{The universal character ring of the $(-2,2m+1,2n)$-pretzel link}

\author[Anh T. Tran]{Anh T. Tran}
\address{Department of Mathematics, The Ohio State University, Columbus, OH 43210, USA}
\email{tran.350@osu.edu}

\thanks{2000 {\em Mathematics Classification:} 57M27.\\
{\em Key words and phrases: character variety, universal character ring, pretzel link.}}

\begin{abstract}
We explicitly calculate the universal character ring of the $(-2,2m+1,2n)$-pretzel link and show that it is reduced for all integers $m$ and $n$.
\end{abstract}

\maketitle

\setcounter{section}{-1}

\section{Introduction}

\subsection{The character variety and the universal character ring} The set of representations of a finitely presented group $G$ into $SL_2(\BC)$ is an algebraic set defined over $\BC$, on which
$SL_2(\BC)$ acts by conjugation. The set-theoretic
quotient of the representation space by that action does not
have good topological properties, because two representations with
the same character may belong to different orbits of that action. A better
quotient, the algebro-geometric quotient denoted by $X(G)$
(see \cite{LM}), has the structure of an algebraic
set. There is a bijection between $X(G)$ and the set of all
characters of representations of $G$ into $SL_2(\BC)$, hence
$X(G)$ is usually called the {\em character variety} of $G$. It is determined by the traces of some fixed elements $g_1, \cdots, g_k$ in $G$. More precisely, one can find $g_1, \cdots, g_k$ in $G$ such that for every element $g$ in $G$ there exists a polynomial $P_g$ in $k$ variables such that for any representation $\rho: G \to SL_2(\BC)$ one has $\tr(\rho(g)) = P_g(x_1, \cdots, x_k)$ where $x_j:=\tr(\rho(g_j))$. The {\em universal character ring} of $G$ is then defined to be the quotient of the polynomial ring $\BC[x_1, \cdots, x_k]$ by the ideal generated by all expressions of the form $\tr(\rho(u))-\tr(\rho(v))$, where $u$ and $v$ are any two words in the letters $g_1, \cdots, g_k$ which are equal in $G$, c.f. \cite{LTaj}. The universal character ring of $G$ is actually independent of the choice of $g_1, \cdots, g_k$. The quotient of the universal character ring of $G$ by its nil-radical is equal to the ring of regular functions on the character variety $X(G)$.

\subsection{Main results} Let $F_{a,w}:=\la a,w \ra$ be the free group in 2 letters $a$ and $w$. The character variety of $F_{a,w}$ is isomorphic to $\BC^3$ by the Fricke-Klein-Vogt theorem, see \cite{LM}. For every word $u$ in $F_{a,w}$ there is a \emph{unique} polynomial $P_u$ in 3 variables such that for any representation $\rho: F_{a,w} \to SL_2(\BC)$ one has $\tr (\rho(u))=P_u (x,y,z)$ where $x:=\tr(\rho(a)),~y:=\tr(\rho(w))$ and $z:=\tr(\rho(aw))$. For a word $u$ in $F_{a,w}$, we denote by $\overleftarrow{u}$ the word obtained from $u$ by writing the letters in $u$ in reversed order. In this paper we consider the group $$G:=\la a,w \mid r=\overleftarrow{r}\ra,$$ where $r$ is a word in $F_{a,w}$. For every representation $\rho: G \to SL_2(\BC)$, we consider $x,y,$ and $z$ as functions of $\rho$. The universal character ring of $G$ is calculated as follows.

\begin{thm}
The universal character ring of the group $\la a,w \mid r=\overleftarrow{r}\ra$ is the quotient of the polynomial ring $\BC[x,y,z]$ by the principal ideal generated by the polynomial $P_{raw}-P_{\overleftarrow{r}aw}.$
\label{main}
\end{thm}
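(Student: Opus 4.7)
The plan is to cut the infinitely many trace relations coming from $r=\overleftarrow{r}$ down to a single principal generator, by exploiting two structural facts about two-generator $SL_2(\BC)$-representations: the palindromic invariance of trace polynomials, and the polynomial expansion of any word in the basis $\{I,\rho(a),\rho(w),\rho(aw)\}$ of $M_2(\BC)$.

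First I would establish that $P_u = P_{\overleftarrow u}$ in $\BC[x,y,z]$ for every $u\in F_{a,w}$. Given $\rho\colon F_{a,w}\to SL_2(\BC)$, define the companion map $\tilde\rho(u):=\rho(\overleftarrow{u})^{T}$; because transpose is an anti-automorphism of matrix multiplication, $\tilde\rho$ is a genuine homomorphism, with $\tr\tilde\rho(a)=x$, $\tr\tilde\rho(w)=y$, $\tr\tilde\rho(aw)=z$. By the Fricke--Klein--Vogt theorem quoted in the introduction, $\tr\tilde\rho(u)=\tr\rho(u)$ for every word $u$, hence $\tr\rho(\overleftarrow u)=\tr\rho(u)$. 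Combined with cyclic invariance of trace, this yields the three polynomial identities
\[
P_r = P_{\overleftarrow r},\qquad P_{ar} = P_{a\overleftarrow r},\qquad P_{wr} = P_{w\overleftarrow r}.
\]

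Next I would prove, by induction on the length of $u$, that there exist polynomials $s_0(u),s_1(u),s_2(u),s_3(u)\in\BC[x,y,z]$ such that
\[
\rho(u) \;=\; s_0(u)\,I + s_1(u)\,\rho(a) + s_2(u)\,\rho(w) + s_3(u)\,\rho(aw)
\]
holds as a matrix identity in $M_2(\BC)$ for every representation $\rho$. The inductive step multiplies on the right by $a^{\pm 1}$ or $w^{\pm 1}$ and re-expands the resulting products (such as $wa$, $awa$, $a^2$) back in the chosen basis using the Cayley--Hamilton identity $A+A^{-1}=\tr(A)\,I$. Uniqueness on the Zariski-open locus where $\{I,\rho(a),\rho(w),\rho(aw)\}$ is actually a basis, together with continuity, extends the identity across the entire representation variety.

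The theorem then follows from one computation. Setting $M:=\rho(r)-\rho(\overleftarrow r)$, taking the trace of $\rho(u)\cdot M$ and expanding via the previous identity gives
\[
P_{ur}-P_{u\overleftarrow r} \;=\; s_0(u)(P_r-P_{\overleftarrow r}) + s_1(u)(P_{ar}-P_{a\overleftarrow r}) + s_2(u)(P_{wr}-P_{w\overleftarrow r}) + s_3(u)(P_{awr}-P_{aw\overleftarrow r}).
\]
The first three summands vanish identically by Step 1, and the last, after cyclically shifting each trace, equals $s_3(u)\,(P_{raw}-P_{\overleftarrow r\,aw})$. Since every defining relation $P_{g_1 r g_2}-P_{g_1 \overleftarrow r g_2}$ of the universal character ring is, by cyclic invariance, already of the form $P_{ur}-P_{u\overleftarrow r}$ for some $u$, the ideal of relations is principal and generated by $P_{raw}-P_{\overleftarrow r aw}$, as claimed.

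The main technical hurdle, I expect, is Step 2: the coefficients $s_i(u)$ must be honest polynomials, not merely rational functions in $x,y,z$. A naive Cramer's-rule argument drags in the Gram discriminant of $\{I,a,w,aw\}$ as a denominator, and without polynomiality the final identity would escape $\BC[x,y,z]$. The explicit inductive construction is essential precisely because it never introduces a division.
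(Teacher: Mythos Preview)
Your plan is correct and complete in its essentials; the only step you leave implicit is why the defining ideal of the universal character ring is already generated by differences of the shape $P_{g_1 r g_2}-P_{g_1\overleftarrow r g_2}$. That follows from a telescoping argument: any equality $u'=v'$ in $G$ is realised by a finite chain of single applications of $r\leftrightarrow\overleftarrow r$ inside a word, and each step contributes a generator of that shape. Once that is said, your expansion of $\rho(u)$ in the $\BC[x,y,z]$-basis $\{I,\rho(a),\rho(w),\rho(aw)\}$ does the rest.

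Your route is genuinely different from the paper's. The paper first proves a general finiteness statement (its Proposition~1.1): for \emph{any} two-generator one-relator group $\langle a,w\mid u=v\rangle$ the ideal is generated by the five polynomials $P_u-P_v$, $P_{ua}-P_{va}$, $P_{uw}-P_{vw}$, $P_{uaw}-P_{vaw}$, $P_{uwa}-P_{vwa}$, via an induction on word length using only the scalar trace identity $P_{BAC}+P_{BA^{-1}C}=P_A P_{BC}$. It then invokes palindromic invariance $P_s=P_{\overleftarrow s}$ to kill three of the five and to identify the remaining two as $\pm(P_{raw}-P_{\overleftarrow r aw})$. By contrast, you work at the matrix level rather than the trace level: the polarised Cayley--Hamilton identity lets you re-expand products in the four-element basis with honest polynomial coefficients, and then a single trace pairing with $\rho(r)-\rho(\overleftarrow r)$ collapses everything to one generator. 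Your argument is more direct for this palindromic situation and makes the principal nature of the ideal transparent in one stroke; the paper's argument has the compensating advantage that its Proposition~1.1 is a reusable statement about arbitrary one-relator two-generator groups, independent of any symmetry of the relator.
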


In our joint work with T. Le on the AJ conjecture of \cite{Ga, Ge, FGL} which relates the A-polynomial and the colored Jones polynomials of a knot, it is important to know whether the universal character
ring of the knot group is reduced, i.e. whether its nilradical is zero \cite{Le06, LTaj}. So far
there are a few groups for which the universal character ring is known to be reduced:
free groups \cite{Si}, surface groups \cite{Sim1, Sim2, Si}, two-bridge knot groups \cite{Le93, PS}, torus knot groups
\cite{Mu, MO, Ma}, the $(-2,3,2n+1)$-pretzel knot groups \cite{LTaj}, and two-bridge link groups \cite{LTskein}.

In the present paper we consider the $(-2,2m+1,2n)$-pretzel link group, where $m$ and $n$ are integers. As an application of Theorem \ref{main} we will show that

\begin{thm}
(i) The fundamental group of the $(-2,2m+1,2n)$-pretzel link is isomorphic to the group $\la a,w \mid r=\overleftarrow{r}\ra$ where $r:=u^{n-1}awaw^{-1}a^{-1}$ and $u:=(awaw^{-1})^{1-m}w.$ Hence its universal character ring is the quotient of the polynomial ring $\BC[x,y,z]$ by the principal ideal generated by the polynomial $$
P_{raw}-P_{\overleftarrow{r}aw} = (xyz+4-x^2-y^2-z^2)[(xz-y)S_{n-1}(\alpha)-(S_{m}(\beta)-S_{m-1}(\beta))S_{n-2}(\alpha)],
$$
where 
\begin{eqnarray*}
\alpha &:=&P_u=yS_{m-1}(\beta)-(xz-y)S_{m-2}(\beta),\\
\beta &:=& P_{awaw^{-1}}=xyz+2-y^2-z^2, 
\end{eqnarray*}
and $S_k(\gamma)$ are the Chebyshev polynomials defined by $S_0(\gamma)=1,~S_1(\gamma)=\gamma$ and $S_{k+1}(\gamma)=\gamma S_k(\gamma)-S_{k-1}(\gamma)$ for all integer $k$. 

(ii) The universal character ring of the $(-2,2m+1,2n)$-pretzel link is reduced for all integers $m$ and $n$.
\label{pretzel}
\end{thm}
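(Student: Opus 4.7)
For part (i), I would start from the Wirtinger presentation of the $(-2,2m+1,2n)$-pretzel link and eliminate all meridians except two, $a$ and $w$, taken at one endpoint of the $(-2)$-tangle. Tracking meridians through the $(2m+1)$-tangle via the standard crossing rule turns the entering meridian into the word $u=(awaw^{-1})^{1-m}w$, and a further pass through the $2n$-tangle builds up $r=u^{n-1}awaw^{-1}a^{-1}$; the two crossings of the $(-2)$-tangle then contribute a pair of relations that, after the eliminations above, collapse to the single relation $r=\overleftarrow{r}$. By Theorem~\ref{main}, the universal character ring is then $\BC[x,y,z]/(P_{raw}-P_{\overleftarrow{r}aw})$.

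To evaluate $P_{raw}-P_{\overleftarrow{r}aw}$ I would use the standard $SL_2(\BC)$ trace identities
$$\tr(XY)+\tr(XY^{-1})=\tr(X)\tr(Y),\qquad \tr(X^kY)=S_{k-1}(\tr X)\tr(XY)-S_{k-2}(\tr X)\tr(Y).$$
Applying the second with $X=awaw^{-1}$, for which $\tr X=\beta$, yields $\alpha=P_u$ in the stated form. Since $raw=u^{n-1}awa$, a further application with $X=u$ and $Y=awa$ computes $P_{raw}$; a parallel computation using $\tr(\overleftarrow v)=\tr(v)$ handles $P_{\overleftarrow r aw}$. The two expansions share the $S_{n-1}(\alpha)$ coefficient up to a common factor, while the $S_{n-2}(\alpha)$ coefficient combines with the Fricke commutator identity $\tr(awa^{-1}w^{-1})=x^2+y^2+z^2-xyz-2$ to produce the prefactor $xyz+4-x^2-y^2-z^2$ displayed in the theorem.

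For part (ii), the ring is $\BC[x,y,z]/(f)$ with $f=f_1 f_2$, where $f_1:=xyz+4-x^2-y^2-z^2$ and $f_2$ is the bracketed polynomial, so reducedness is equivalent to $f$ being squarefree in $\BC[x,y,z]$. I would verify: (a) $f_1$ is irreducible, a classical fact reflecting that $\{f_1=0\}$ is exactly the closure of the locus of reducible characters of the rank-$2$ free group; (b) $f_1\nmid f_2$, by exhibiting a single point on $\{f_1=0\}$ at which $f_2\neq 0$; and (c) $f_2$ is squarefree, which I would obtain by viewing $f_2$ as a polynomial in $z$ and showing that any repeated irreducible factor would force the simultaneous vanishing of $S_{n-1}(\alpha)$ and $S_{n-2}(\alpha)$, impossible since consecutive Chebyshev polynomials are coprime.

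The main obstacle is step (c): the polynomials $\alpha$ and $\beta$ are themselves nonlinear in $z$, so even though the abstract polynomial $(xz-y)S_{n-1}(T)-(S_m(\beta)-S_{m-1}(\beta))S_{n-2}(T)$ is squarefree in $T$, the substitution $T=\alpha(x,y,z)$ could a priori introduce square factors via ramification of the map $(x,y,z)\mapsto(\alpha,\beta)$. The cleanest route I foresee is to specialize one of $x,y$ so that this substitution is generically separable, combined with an explicit treatment of the edge cases in small $m$ and small $n$ where $S_{n-2}$ or $S_{m-2}$ degenerates.
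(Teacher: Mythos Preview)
Your outline for part (i) is essentially the paper's argument: start from a three-meridian presentation, introduce $w$, verify that $u$ is a palindrome so the single surviving relation has the form $r=\overleftarrow r$, and then expand $P_{raw}$ and $P_{\overleftarrow r aw}$ in Chebyshev polynomials of $\alpha=P_u$ using the Cayley--Hamilton identity. (One small inaccuracy: the $S_{n-1}(\alpha)$-coefficients of the two traces are \emph{not} equal; they are $(xz-y)$ and $(xz-y)(x^2-\beta-1)$ respectively, and it is their difference, together with the analogous $S_{n-2}$-difference, that carries the common factor $\beta+2-x^2=xyz+4-x^2-y^2-z^2$.)

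For part (ii), steps (a) and (b) are fine. The gap is in (c). Your first claim --- that a repeated irreducible factor $p$ of $f_2$ must divide both $S_{n-1}(\alpha)$ and $S_{n-2}(\alpha)$ --- is unjustified: $f_2=(xz-y)S_{n-1}(\alpha)-(S_m(\beta)-S_{m-1}(\beta))S_{n-2}(\alpha)$ is a linear combination whose coefficients themselves depend on $x,y,z$, so a square factor of the combination need not divide either term separately. You acknowledge this and fall back on ``specialize one of $x,y$ so the substitution $T\mapsto\alpha$ is generically separable,'' but that is a hope rather than an argument; you still have to control the ramification locus and the degenerate small-$m$, small-$n$ cases, and nothing you have written explains why this is tractable.

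The paper takes a concrete route that your plan does not anticipate. It proves that the specialization $z=0$ collapses $Q$ to a \emph{single} Chebyshev polynomial, $Q(x,y,0)=\pm S_{2mn-2m-n-2}(y)$, which is squarefree in $y$ and independent of $x$. A separate (lengthy) case analysis then pins down the leading $y$-term of $Q(x,y,z)$ for every pair $(m,n)$ and shows that $\deg_y Q-\deg_y Q(x,y,0)\le 2$, with leading $y$-coefficient dividing $z^2$. Any repeated irreducible factor $R$ of $Q(0,y,z)$ must therefore have $y$-leading coefficient dividing $z$; a short case split on $\deg_y R$, together with the observation that $Q(0,y,z)$ is even in $z$, then yields a contradiction. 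The specialization philosophy is yours, but the specific fact that makes it work --- the $z=0$ collapse to a one-variable Chebyshev polynomial --- is the missing ingredient.
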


\begin{rmk}
The universal character ring of the $(-2,2m+1,2n+1)$-pretzel knot is calculated in a related paper \cite{Tr}. However, its reducedness is not proved.
\end{rmk}

\subsection{Acknowledgements} We would like to thank T. Le for helpful discussions. We would also like the referee for comments and suggestions.

\medskip

The rest of the paper is devoted to the proof of Theorems \ref{main} and \ref{pretzel}.

\section{Proof of Theorem \ref{main}}

\begin{proposition}
Let $G:=\la a,w \mid u=v\ra$, where $u$ and $v$ are two words in $F_{a,w}$. Then the universal character ring of $G$ is the quotient of the polynomial ring $\BC[x,y,z]$ by the ideal generated by the five polynomials $P_u-P_v,~P_{ua}-P_{va},~P_{uw}-P_{vw},~P_{uaw}-P_{vaw}$ and $P_{uwa}-P_{vwa}.$
\label{prop}
\end{proposition}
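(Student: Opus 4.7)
The plan is to show that the ideal $I \subseteq \BC[x,y,z]$ of all polynomials $P_s - P_t$ with $s = t$ in $G$ coincides with the ideal $J$ generated by the five listed polynomials. The inclusion $J \subseteq I$ is immediate, since each generator has the form $P_s - P_t$ with $s = t$ in $G$ (for instance $u = v$ gives $uaw = vaw$). For the reverse inclusion, note that any two words equal in $G$ differ by a chain of substitutions of the relator $uv^{-1}$, and by cyclicity of trace $P_{\alpha u \beta} = P_{\beta \alpha u}$; so it is enough to prove
\[
P_{\gamma u} - P_{\gamma v} \in J \quad \text{for every word } \gamma \in F_{a,w}.
\]

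The core step is a spanning lemma, to be proved by induction on $|\gamma|$: writing $A = \rho(a)$, $W = \rho(w)$, one has
\[
\rho(\gamma) = \alpha_0 I + \alpha_1 A + \alpha_2 W + \alpha_3 AW + \alpha_4 WA
\]
for some $\alpha_i \in \BC[x,y,z]$ depending only on $\gamma$. The cases $|\gamma| \le 2$ are direct from the Cayley--Hamilton relations $A^2 = xA - I$, $W^2 = yW - I$, $A^{-1} = xI - A$, $W^{-1} = yI - W$. The inductive step reduces to checking that the span of $\{I, A, W, AW, WA\}$ is closed under left multiplication by each of $a^{\pm 1}, w^{\pm 1}$. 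All products are routine except $A \cdot WA = AWA$ and $W \cdot AW = WAW$, which I would handle via the polarized Cayley--Hamilton identity
\[
AW + WA = yA + xW + (z - xy) I,
\]
obtained by applying Cayley--Hamilton to $A + W \in M_2(\BC)$ together with $\det(A+W) = 2 + xy - z$. Substituting this into $A(WA)$ and $W(AW)$ yields $AWA = -yI + zA + W$ and $WAW = -xI + A + zW$, both in the span.

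Granting the lemma, take the trace of $\rho(\gamma)(U-V)$ with $U = \rho(u), V = \rho(v)$ and use cyclicity (e.g.\ $\tr(A(U-V)) = P_{ua} - P_{va}$, $\tr(WA(U-V)) = P_{uwa} - P_{vwa}$) to obtain
\[
P_{\gamma u} - P_{\gamma v} = \alpha_0 (P_u - P_v) + \alpha_1 (P_{ua} - P_{va}) + \alpha_2 (P_{uw} - P_{vw}) + \alpha_3 (P_{uaw} - P_{vaw}) + \alpha_4 (P_{uwa} - P_{vwa}),
\]
which is visibly in $J$. This closes the argument.

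The main obstacle is the spanning lemma, and within it the polarized Cayley--Hamilton identity; without it the span $\{I, A, W, AW, WA\}$ is not obviously closed under multiplication by $a$ or $w$, and more naive strategies fail. In particular, the usual ``peel off a leading letter'' recursion via $P_{g\gamma' u} = \tr(g) P_{\gamma' u} - P_{g^{-1}\gamma' u}$ does not reduce word length when $g$ and the first letter of $\gamma'$ come from different generators, so one genuinely needs a separate structural fact to organize the induction. Once the identity is available, the closure check and the subsequent trace computation are mechanical.
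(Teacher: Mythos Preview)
Your argument is correct, and it takes a genuinely different route from the paper's own proof.

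The paper works entirely at the level of trace polynomials. It uses the single identity $P_{BAC}+P_{BA^{-1}C}=P_A P_{BC}$ (equivalently $A+A^{-1}=\tr(A)\,I$) to run a two-stage induction: first on $g=g_{i_1}^{m_1}g_{i_2}^{m_2}$ via a complexity counter $\eta=k_1+k_2$, then on the number of blocks in a general word, each step peeling off a factor and reducing to shorter words. The argument never leaves the scalar (trace) world and mirrors Culler--Shalen's original treatment.

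Your approach instead works at the matrix level: you show that for any representation the image of every word lies in the $\BC[x,y,z]$-span of $\{I,A,W,AW,WA\}$, using ordinary Cayley--Hamilton together with the polarized identity $AW+WA=yA+xW+(z-xy)I$ to close the span under multiplication, and only then take a single trace. This is more structural: the five generators of the ideal are explained as traces against a spanning set of a rank-five module, rather than appearing as the base cases of an induction one must guess in advance. The cost is that you need the extra polarized identity (which you derive correctly; note $\det(A+W)=2+xy-z$, so $2-\det(A+W)=z-xy$), whereas the paper gets by with the unpolarized relation alone. Conversely, your organization avoids the somewhat delicate double induction and generalizes transparently to more generators.

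One small point worth tightening in your write-up: the reduction from ``$s=t$ in $G$'' to ``$P_{\gamma u}-P_{\gamma v}\in J$'' deserves one more line. Writing $st^{-1}=\prod_j g_j(uv^{-1})^{\epsilon_j}g_j^{-1}$ in $F_{a,w}$ and telescoping, each step contributes $P_{g(uv^{-1})^{\pm1}g^{-1}h}-P_h$; cyclicity and the substitution $\gamma=v^{-1}g^{-1}hg$ (respectively $\gamma=u^{-1}g^{-1}hg$) turn this into $\pm(P_{\gamma u}-P_{\gamma v})$. This is implicit in what you wrote but is the one place a reader might pause.
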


\begin{proof}
Let $I$ be the ideal in $\BC[x,y,z]$ generated by the five  polynomials $P_u-P_v,~P_{ua}-P_{va},~P_{uw}-P_{vw},~P_{uaw}-P_{vaw}$ and $P_{uwa}-P_{vwa}.$ We need to show that $P_{ug}-P_{vg} \in I$ for every $g \in G$. The proof will be based on the identity
\begin{equation}
P_{BAC}+P_{BA^{-1}C}=P_A P_{BC}
\label{Cayley}
\end{equation}
for all matrices $A,B,C$ in $SL_2(\BC)$, which follows from the identity $A+A^{-1}=P_A I_{2 \times 2}$ where $I_{2 \times 2}$ is the $2 \times 2$ identity matrix. 

Let $g_1:=a$ and $g_2:=w$. We first show that $P_{ug}-P_{vg} \in I$ whenever $g=g_{i_1}^{m_1} g_{i_2}^{m_2}$, where $i_1, i_2$ are distinct positive integers $\le 2$ and $m_1, m_2 \in \BZ.$ We use induction on the integer $\eta=k_1+k_2$ where $k_j$ is defined to be $-m_j$ if $m_j \le 0$ and $m_j-1$ if $m_j>0.$ If $\eta=0$ then all the $m_j$ are 0 or 1, so $g$ is equal to $1,a,w,aw$ or $wa$ and hence $P_{ug}-P_{vg} \in I$ by definition. If $\eta >0$ then $k_1>0$ or $k_2>0.$ If $k_1>0$ then $m_1 \not=0,1.$ If $m_1<0$ then by applying the identity \eqref{Cayley} we have 
\begin{eqnarray*}
P_{ug}-P_{vg} &=& (P_{g_{i_1}} P_{ug_{i_1}g}-P_{ug_{i_1}^2g})-(P_{g_{i_1}} P_{vg_{i_1}g}-P_{vg_{i_1}^2g})\\
&=& P_{g_{i_1}}(P_{ug_{i_1}g}-P_{vg_{i_1}g})-(P_{ug_{i_1}^2g}-P_{vg_{i_1}^2g})
\end{eqnarray*}
where $P_{ug_{i_1}g}-P_{vg_{i_1}g}$ and $P_{ug_{i_1}^2g}-P_{vg_{i_1}^2g}$ are in $I$ by the induction hypothesis, hence $P_{ug}-P_{vg} \in I.$ A similar reduction works if $m_1>1.$ The case $k_2>0$ is similar.

Now let $g \in G$ be arbitrary. We may write $g$ in the form $g_{i_1}^{m_1} \cdots g_{i_r}^{m_r}$ where $i_1, \cdots, i_r$ are integers that are not necessarily distinct. We will prove by induction on $r$ that $P_{ug}-P_{vg} \in I.$

By the case already proved we may assume that $i_1, \cdots, i_r$ are not all distinct. Suppose that $i_k=i_l$ for some $k<l.$ Let
$$b=g_{i_1}^{m_1} \cdots g_{i_k}^{m_k}, \quad c=g_{i_{k+1}}^{m_{k+1}} \cdots g_{i_l}^{m_l}, \quad d=g_{i_{l+1}}^{m_{l+1}} \cdots g_{i_r}^{m_r}.$$
Then $g=bcd$. By applying the identity \eqref{Cayley} we have
\begin{eqnarray*}
P_{ubcd}-P_{vbcd} &=& (P_{ubd}P_c-P_{ubc^{-1}d})-(P_{vbd}P_c-P_{vbc^{-1}d})\\
&=&(P_{ubd}-P_{vbd})P_c-(P_{ubc^{-1}d}-P_{vbc^{-1}d})
\end{eqnarray*}
But $P_{ubd}-P_{vbd}$ and $P_{ubc^{-1}d}-P_{vbc^{-1}d}$ are in $I$ by the induction hypothesis, and hence $P_{ubcd}-P_{vbcd}$ is also in $I.$
\end{proof}

\begin{rmk}
The proof of Proposition \ref{prop} is similar to that of \cite[Prop 1.4.1]{CS}.
\end{rmk}

\begin{proposition}
For every words $u,v$ in $F_{a,w}$ one has $P_{uv} = P_{\overleftarrow{u}\overleftarrow{v}}.$
\label{zero}
\end{proposition}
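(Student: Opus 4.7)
The plan is to introduce, for each representation $\rho \colon F_{a,w} \to SL_2(\BC)$, a companion representation $\rho^{\ast}$ that shares the trace coordinates $(x,y,z)$ of $\rho$ but whose trace values compute those of the \emph{reversed} words; combined with the uniqueness half of the Fricke-Klein-Vogt theorem, this will force the polynomial identity $P_g = P_{\overleftarrow g}$ for every word $g$, and the proposition then follows by cyclic invariance of trace.

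Explicitly, I would let $\rho^{\ast} \colon F_{a,w} \to SL_2(\BC)$ be the homomorphism determined by $\rho^{\ast}(a):=\rho(a)^T$ and $\rho^{\ast}(w):=\rho(w)^T$. Since $(MN)^T = N^T M^T$, a short induction on the length of a word $g = g_{i_1}^{\epsilon_1} \cdots g_{i_k}^{\epsilon_k}$ (with $g_{i_j} \in \{a,w\}$ and $\epsilon_j = \pm 1$) yields
\begin{equation*}
\rho^{\ast}(g) \;=\; \rho(g_{i_1})^{\epsilon_1 T} \cdots \rho(g_{i_k})^{\epsilon_k T} \;=\; \bigl( \rho(g_{i_k})^{\epsilon_k} \cdots \rho(g_{i_1})^{\epsilon_1} \bigr)^{T} \;=\; \rho(\overleftarrow g)^{T},
\end{equation*}
so $\tr \rho^{\ast}(g) = \tr \rho(\overleftarrow g)$. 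I would then check that $\rho^{\ast}$ has the same trace coordinates as $\rho$: invariance of trace under transpose handles $x$ and $y$ directly, and for $z$ one has $\tr \rho^{\ast}(aw) = \tr\bigl( (\rho(w)\rho(a))^{T} \bigr) = \tr \rho(wa) = \tr \rho(aw) = z$.

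Applying the uniqueness of the Fricke polynomials to the representation $\rho^{\ast}$ (whose trace coordinates are $(x,y,z)$) then gives
\begin{equation*}
P_g(x,y,z) \;=\; \tr \rho^{\ast}(g) \;=\; \tr \rho(\overleftarrow g) \;=\; P_{\overleftarrow g}(x,y,z).
\end{equation*}
Letting $\rho$ range over all representations (equivalently $(x,y,z)$ over $\BC^{3}$), we conclude $P_g = P_{\overleftarrow g}$ as polynomials in $\BC[x,y,z]$ for every $g \in F_{a,w}$. Setting $g = uv$ yields $P_{uv} = P_{\overleftarrow{v}\overleftarrow{u}}$, and the cyclic identity $\tr(AB)=\tr(BA)$ gives $P_{\overleftarrow{v}\overleftarrow{u}} = P_{\overleftarrow{u}\overleftarrow{v}}$, completing the argument. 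I do not anticipate any serious obstacle here; the only subtlety is keeping straight that $\rho^{\ast}$ has the \emph{same} trace coordinates as $\rho$, so that the pointwise identities for every $\rho$ aggregate into a genuine equality of polynomials.
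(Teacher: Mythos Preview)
Your argument is correct. Both your proof and the paper's proceed by first establishing the key identity $P_g = P_{\overleftarrow{g}}$ for every word $g \in F_{a,w}$, then using $\overleftarrow{uv}=\overleftarrow{v}\,\overleftarrow{u}$ together with cyclic invariance of trace to conclude. The only difference is in how that key identity is obtained: the paper simply cites \cite[Lem.~3.2.2]{Le93}, whereas you supply a direct, self-contained proof via the ``transpose representation'' $\rho^{\ast}$ (sending each generator to the transpose of its image under $\rho$) and the surjectivity part of the Fricke--Klein--Vogt theorem. Your approach has the advantage of not relying on an external reference and of making transparent \emph{why} reversal preserves trace polynomials; the paper's approach is more concise. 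Either way, the final step---passing from $P_{\overleftarrow{v}\overleftarrow{u}}$ to $P_{\overleftarrow{u}\overleftarrow{v}}$ via $\tr(AB)=\tr(BA)$---is identical.
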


\begin{proof}
It is easy to see from the definition of the operator $\overleftarrow{\cdot}$ that $\overleftarrow{uv}=\overleftarrow{v}\overleftarrow{u}.$ By \cite[Lem 3.2.2]{Le93}, for every word $s$ in $F_{a,w}$ we have $P_s=P_{\overleftarrow{s}}.$ Hence $P_{uv}=P_{\overleftarrow{uv}}=P_{\overleftarrow{v}\overleftarrow{u}}.$ The proposition follows since $P_{\overleftarrow{v}\overleftarrow{u}}=P_{\overleftarrow{u}\overleftarrow{v}}.$
\end{proof}

\subsection{Proof of Theorem \ref{main}} From Proposition \ref{prop} it follows that the universal character ring of the group $G=\la a,w \mid r=\overleftarrow{r}\ra$ is the quotient of the polynomial ring $\BC[x,y,z]$ by the ideal generated by the five polynomials $P_r-P_{\overleftarrow{r}},~P_{ra}-P_{\overleftarrow{r}a},~P_{rw}-P_{\overleftarrow{r}w},~P_{raw}-P_{\overleftarrow{r}aw}$ and $P_{rwa}-P_{\overleftarrow{r}wa}$. By Proposition \ref{zero} we have 
\begin{eqnarray*}
P_r-P_{\overleftarrow{r}}&=&0,\\
P_{ra}-P_{\overleftarrow{r}a}&=&0,\\
P_{rw}-P_{\overleftarrow{r}w}&=&0,\\
P_{raw}-P_{\overleftarrow{r}aw} &=& P_{\overleftarrow{r}wa}-P_{rwa}.
\end{eqnarray*}
Hence the universal character ring of $G$ is the quotient of the polynomial ring $\BC[x,y,z]$ by the principal ideal generated by the polynomial $P_{raw}-P_{\overleftarrow{r}aw}.$

\section{Proof of Theorem \ref{pretzel}}

\subsection{Proof of part (i)} The fundamental group of the $(-2,2m+1,2n)$-pretzel link is
$$\pi:=\la a,b,c \mid bab^{-1}=(ac)^{-m}c(ac)^m,~a^{-1}ba=(cb)^nb(cb)^{-n}\ra
$$
where $a, b, c$ are meridians depicted in Figure 1. 

\begin{figure}[htpb]
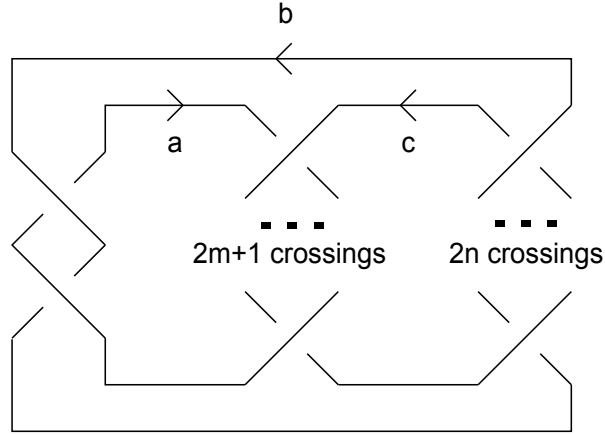

$$ \psdraw{pretzellink}{3.1in} $$
\caption{The $(-2,2m+1,2n)$-pretzel link}
\end{figure}

The first relation in the group $\pi$ is $(ac)^m ba=c(ac)^m b$, i.e. $a(ca)^{m-1}cba=ca(ca)^{m-1}cb.$ Let $w=(ca)^{m-1}cb$ then  $awa=caw$. It implies that $ca=awaw^{-1}$ and $cb=(ca)^{1-m}w=(awaw^{-1})^{1-m}w.$ Let $u=(awaw^{-1})^{1-m}w$. Then $cb=u$ and so $$b=c^{-1}u=awa^{-1}w^{-1}a^{-1}(awaw^{-1})^{1-m}w=a(awaw^{-1})^{-m}w.$$
The second relation in the group $\pi$ becomes $(awaw^{-1})^{-m}wa=u^na(awaw^{-1})^{-m}wu^{-n}$, which is equivalent to $u^{n-1}awaw^{-1}a^{-1}=a^{-1}w^{-1}awau^{n-1}.$ Therefore
$$\pi=\la a, w \mid u^{n-1}awaw^{-1}a^{-1}=a^{-1}w^{-1}awau^{n-1} \ra.$$

\begin{lemma} One has $u=\overleftarrow{u}$, i.e. $u$ is a palindrome.
\label{u}
\end{lemma}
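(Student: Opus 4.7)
The plan is to compute $\overleftarrow{u}$ directly from the definition and show it equals $u$ by a one-line conjugation identity in the free group $F_{a,w}$. First I would use the basic property $\overleftarrow{xy}=\overleftarrow{y}\,\overleftarrow{x}$ (noted in the proof of Proposition \ref{zero}) together with $\overleftarrow{x^k}=(\overleftarrow{x})^k$, valid for all $k\in\BZ$ since $\overleftarrow{x^{-1}}=(\overleftarrow{x})^{-1}$ is immediate from the definition of the reversal operator. Applying these to $u=(awaw^{-1})^{1-m}w$, and using $\overleftarrow{awaw^{-1}}=w^{-1}awa$, I obtain
\[
\overleftarrow{u}\;=\;\overleftarrow{w}\cdot\overleftarrow{(awaw^{-1})^{1-m}}\;=\;w\,(w^{-1}awa)^{1-m}.
\]

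Next, the key algebraic observation is the conjugation identity
\[
w^{-1}(awaw^{-1})w\;=\;w^{-1}awa,
\]
which is an obvious equality in $F_{a,w}$. Raising both sides to the $(1-m)$-th power (well-defined for all integers $1-m$ since the operation is conjugation) gives $(awaw^{-1})^{1-m}=w(w^{-1}awa)^{1-m}w^{-1}$. Multiplying by $w$ on the right yields
\[
u\;=\;(awaw^{-1})^{1-m}w\;=\;w(w^{-1}awa)^{1-m}\;=\;\overleftarrow{u},
\]
completing the proof.

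Since the whole argument reduces to a bookkeeping computation together with a single conjugation identity, there is no real obstacle. The only subtle point is treating the case $1-m<0$ uniformly with the case $1-m\ge 0$; this is handled at the outset by verifying that the reversal operator commutes with taking arbitrary integer powers, after which the proof proceeds identically in both cases.
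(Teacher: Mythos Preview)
Your proof is correct and follows essentially the same approach as the paper: both first establish $\overleftarrow{s^k}=\overleftarrow{s}^{\,k}$ for all integers $k$, compute $\overleftarrow{u}=w(w^{-1}awa)^{1-m}$, and then use the conjugation identity $w^{-1}(awaw^{-1})w=w^{-1}awa$ to conclude. The only cosmetic difference is that the paper rewrites $w(w^{-1}awa)^{1-m}$ as $(awaw^{-1})^{-m}awa$ before identifying it with $u$, whereas you go directly from $(awaw^{-1})^{1-m}w$ to $w(w^{-1}awa)^{1-m}$.
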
 

\begin{proof} We first claim that $\overleftarrow{s^k}=\overleftarrow{s}^k$ for all integers $k$. Indeed, since $\overleftarrow{s}\overleftarrow{s^{-1}}=\overleftarrow{s^{-1}s}=1$ we obtain $\overleftarrow{s^{-1}}=\overleftarrow{s}^{-1}.$ If $k \ge 0$ then it is easy to prove by induction on $k$ that $\overleftarrow{s^k}=\overleftarrow{s}^k$. If $k<0$ then $\overleftarrow{s^k}=\overleftarrow{(s^{-1})^{-k}}=\left( \overleftarrow{s^{-1}} \right)^{-k}=(\overleftarrow{s}^{-1})^{-k}=\overleftarrow{s}^k.$

Applying the identity in the above claim with $s=awaw^{-1}$ and $k=1-m$ we get
$$\overleftarrow{u}=\overleftarrow{(awaw^{-1})^{1-m}w}=w(w^{-1}awa)^{1-m}=
w[w^{-1}(awaw^{-1})^{-m}awa]=(awaw^{-1})^{-m}awa.$$
It implies that $\overleftarrow{u}=(awaw^{-1})^{1-m}w=u.$
\end{proof}

Let $r:=u^{n-1}awaw^{-1}a^{-1}.$ Then, by Lemma \ref{u}, we have $\overleftarrow{r}=a^{-1}w^{-1}awa\overleftarrow{u}^{n-1}=a^{-1}w^{-1}awau^{n-1}.$ Hence $\pi=\la a,w \mid r=\overleftarrow{r} \ra$ and so, by Theorem \ref{main}, the universal character ring of $\pi$ is the quotient of the polynomial ring $\BC[x,y,z]$ by the principal ideal generated by the polynomial $P_{raw}-P_{\overleftarrow{r}aw}$, where $x=P_a,~y=P_w$ and $z=P_{aw}$. 

\begin{lemma}
Suppose the sequence $\{f_k\}_{k=-\infty}^{\infty}$ satisfies the recurrence relation $f_{k+1}=\gamma f_k-f_{k-1}$. Then $f_k=S_{k-1}(\gamma)f_1-S_{k-2}(\gamma)f_0,$ where $S_k(\gamma)$ are the Chebyshev polynomials defined by $S_0(\gamma)=1,~S_1(\gamma)=\gamma$ and $S_{k+1}(\gamma)=\gamma S_k(\gamma)-S_{k-1}(\gamma)$ for all integers $k$. 
\label{chev}
\end{lemma}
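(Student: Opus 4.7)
The plan is to observe that both sides of the asserted equality satisfy the same second-order linear recurrence with constant (in $k$) coefficients, and then to verify agreement on two consecutive indices, so that equality propagates to all integers $k$ by induction in both directions.

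First I would compute the necessary values of the Chebyshev polynomials at negative indices by reading the defining recurrence backwards as $S_{k-1}(\gamma)=\gamma S_k(\gamma)-S_{k+1}(\gamma)$. Plugging in $k=0$ yields $S_{-1}(\gamma)=\gamma\cdot 1-\gamma=0$, and plugging in $k=-1$ yields $S_{-2}(\gamma)=\gamma\cdot 0-1=-1$. With these in hand, the base cases of the claim are immediate:
\begin{align*}
S_{-1}(\gamma)f_1-S_{-2}(\gamma)f_0 &= 0\cdot f_1-(-1)f_0 = f_0, \\
S_{0}(\gamma)f_1-S_{-1}(\gamma)f_0 &= 1\cdot f_1-0\cdot f_0 = f_1,
\end{align*}
matching $f_0$ and $f_1$ respectively.

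Next, I would define $g_k:=S_{k-1}(\gamma)f_1-S_{k-2}(\gamma)f_0$ and show by direct computation that $g_k$ satisfies the same recurrence as $f_k$, namely $g_{k+1}=\gamma g_k-g_{k-1}$. This is a two-line manipulation:
\begin{align*}
\gamma g_k-g_{k-1} &= \gamma\bigl[S_{k-1}(\gamma)f_1-S_{k-2}(\gamma)f_0\bigr]-\bigl[S_{k-2}(\gamma)f_1-S_{k-3}(\gamma)f_0\bigr]\\
&= \bigl(\gamma S_{k-1}(\gamma)-S_{k-2}(\gamma)\bigr)f_1-\bigl(\gamma S_{k-2}(\gamma)-S_{k-3}(\gamma)\bigr)f_0\\
&= S_k(\gamma)f_1-S_{k-1}(\gamma)f_0 = g_{k+1},
\end{align*}
using the Chebyshev recurrence for $S_k$. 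Since $f_k$ and $g_k$ satisfy the same linear recurrence and agree at $k=0$ and $k=1$, they agree for all $k\ge 0$ by forward induction, and for all $k\le 1$ by backward induction using the recurrence in the form $f_{k-1}=\gamma f_k-f_{k+1}$.

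There is essentially no obstacle here: the only subtle point is making sure the Chebyshev polynomials are indexed consistently at the negative indices $-1$ and $-2$, and the statement of the lemma already declares the recurrence $S_{k+1}=\gamma S_k-S_{k-1}$ to hold for all integers $k$, so those values are well-defined and equal to $0$ and $-1$ as computed above. The rest is a mechanical induction.
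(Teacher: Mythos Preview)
Your proof is correct and follows essentially the same approach as the paper: define $g_k=S_{k-1}(\gamma)f_1-S_{k-2}(\gamma)f_0$, observe it satisfies the same recurrence, and check the base cases $g_0=f_0$, $g_1=f_1$. Your write-up is simply more explicit (you compute $S_{-2}(\gamma)=-1$ and spell out the verification of the recurrence for $g_k$), but the argument is the same.
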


\begin{proof}
Let $\{g_k\}_{k=-\infty}^{\infty}$ be the sequence defined by $g_k=S_{k-1}(\gamma)f_1-S_{k-2}(\gamma)f_0.$ Then it is easy to see that $g_{k+1}=\gamma g_k-g_{k-1}$. Moreover, since $S_0(\gamma)=1$ and $S_{-1}(\gamma)=0$ we have $g_0=f_0,~g_1=f_1$. Therefore $g_k=f_k$.
\end{proof}

Let $\alpha=:P_u$ and $\beta:=P_{awaw^{-1}}.$

\begin{proposition}
One has \begin{eqnarray*}
\alpha &=&yS_{m-1}(\beta)-(xz-y)S_{m-2}(\beta),\\
\beta &=& xyz+2-y^2-z^2. 
\end{eqnarray*}
\label{alphabeta}
\end{proposition}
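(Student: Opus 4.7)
The plan is to compute $\beta$ directly via the fundamental trace identity \eqref{Cayley}, and then to set up a Chebyshev recurrence for $\alpha = P_u$ governed by the element $v := awaw^{-1}$.

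For $\beta$, I would apply \eqref{Cayley} with the choice $A = aw^2a^{-1}$ sitting as one side of the product $aw \cdot aw^{-1}$. More concretely, set $X := aw$ and $Y := aw^{-1}$, so that $XY = awaw^{-1}$ and $XY^{-1} = aw^2 a^{-1}$. The identity $P_{XY}+P_{XY^{-1}}=P_X P_Y$ (a special case of \eqref{Cayley} with $B=X$, $A=Y$, $C=1$, noting $P_Y=P_{Y^{-1}}$) then gives
\begin{equation*}
\beta + P_{aw^2a^{-1}} = P_{aw}\,P_{aw^{-1}}.
\end{equation*}
Since $P_{aw^2 a^{-1}} = P_{w^2} = y^2-2$ and $P_{aw^{-1}} = P_a P_w - P_{aw} = xy - z$, this immediately produces $\beta = z(xy-z) - (y^2-2) = xyz + 2 - y^2 - z^2$.

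For $\alpha$, I would define $f_k := P_{v^k w}$ so that $\alpha = f_{1-m}$. Applying \eqref{Cayley} with $A = v$, $B = 1$, $C = v^k w$ yields $f_{k+1} + f_{k-1} = \beta f_k$, the exact recurrence treated in Lemma \ref{chev}. For the initial data: $f_0 = P_w = y$, and for $f_1 = P_{vw} = P_{awaw^{-1}w} = P_{awa}$ I would again use \eqref{Cayley} with $A=a$, $B=1$, $C=wa$ to obtain $P_{awa} = P_a P_{wa} - P_{a^{-1}wa} = xz - y$ (using cyclic invariance $P_{wa}=z$ and conjugation invariance $P_{a^{-1}wa}=y$). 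Lemma \ref{chev} then gives $f_k = (xz-y)S_{k-1}(\beta) - y\,S_{k-2}(\beta)$.

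The remaining step is to specialize at $k=1-m$ and rewrite the negative-index Chebyshev values in positive-index form. Using the standard reflection identity $S_{-j}(\gamma) = -S_{j-2}(\gamma)$ for Chebyshev polynomials of the second kind (which follows at once from $S_{-1}=0$, $S_{-2}=-1$ and the symmetric three-term recurrence), one gets $S_{-m}(\beta) = -S_{m-2}(\beta)$ and $S_{-m-1}(\beta) = -S_{m-1}(\beta)$; substituting into $f_{1-m}$ yields exactly $\alpha = yS_{m-1}(\beta) - (xz-y)S_{m-2}(\beta)$. I do not anticipate a real obstacle here: the only subtlety is bookkeeping of signs/indices in the Chebyshev reflection, and sanity-checking with small $m$ (e.g.\ $m=1$ gives $\alpha=y$, consistent with $u=w$) should catch any off-by-one error.
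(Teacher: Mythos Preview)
Your proof is correct and follows essentially the same approach as the paper: both compute $\beta$ from the trace identity \eqref{Cayley} (the paper splits $awaw^{-1}$ as $(awa)\cdot w^{-1}$ rather than $(aw)\cdot(aw^{-1})$, but that is immaterial) and obtain $\alpha$ by running the three-term recurrence in $P_v=\beta$ through Lemma~\ref{chev}. The only cosmetic difference is that the paper first rewrites $u=(awaw^{-1})^{-m}awa$ and uses $P_g=P_{g^{-1}}$ to index the recursion by $m$ directly, whereas you index by $k=1-m$ and invoke the Chebyshev reflection $S_{-j}=-S_{j-2}$ at the end; the two are equivalent.
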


\begin{proof}
By applying the identity \eqref{Cayley} and Lemma \ref{chev} we have
\begin{eqnarray*}
\beta=P_{awaw^{-1}} &=& P_{awa}P_{w}-P_{awaw}\\
&=& (P_{aw}P_a-P_{awa^{-1}})P_w-(P_{aw}P_{aw}-P_{I_2})\\
&=& (zx-y)y-(z^2-2),\\
\alpha = P_u &=& P_{(awaw^{-1})^{-m}awa}\\
&=&P_{(awa)^{-1}(awaw^{-1})^m}\\
&=&P_{w^{-1}}S_{m-1}(\beta)-P_{(awa)^{-1}}S_{m-2}(\beta)\\
&=& yS_{m-1}(\beta)-(xz-y)S_{m-2}(\beta).
\end{eqnarray*}
This proves the proposition.
\end{proof}

\begin{proposition}
One has $$
P_{raw}-P_{\overleftarrow{r}aw} = (xyz+4-x^2-y^2-z^2)[(xz-y)S_{n-1}(\alpha)-(S_{m}(\beta)-S_{m-1}(\beta))S_{n-2}(\alpha)].
\label{P}
$$
\end{proposition}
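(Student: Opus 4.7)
The plan is to strip off two Chebyshev recurrences in succession, one in the exponent of $u$ (with parameter $\alpha=P_u$) and one in the exponent of $v:=awaw^{-1}$ (with parameter $\beta=P_v$). First I would simplify each trace: a direct group-theoretic cancellation gives $raw=u^{n-1}awa$, and since $\overleftarrow{u}=u$ by Lemma \ref{u} the identity $\overleftarrow{s^k}=\overleftarrow{s}^k$ yields $\overleftarrow{r}=a^{-1}w^{-1}awa\cdot u^{n-1}$. Cyclic invariance of the trace then gives
\[
P_{\overleftarrow{r}aw}=P_{K\cdot awa\cdot u^{n-1}},
\]
where $K:=awa^{-1}w^{-1}=[a,w]$ is the commutator, with $P_K=x^2+y^2+z^2-xyz-2$; the desired prefactor is therefore $2-P_K=xyz+4-x^2-y^2-z^2$.

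Next I would introduce $t_k:=P_{u^kawa}-P_{K\cdot awa\cdot u^k}$. Applying \eqref{Cayley} with $A=u$ shows that both $P_{u^kawa}$ and $P_{K\cdot awa\cdot u^k}$ satisfy $X_{k+1}+X_{k-1}=\alpha X_k$, hence so does $t_k$. Lemma \ref{chev} then gives
\[
t_{n-1}=S_{n-2}(\alpha)\,t_1-S_{n-3}(\alpha)\,t_0,
\]
reducing the problem to computing $t_0$ and $t_1$. For $t_0$, applying \eqref{Cayley} with $A=K$ and using the cancellation $K^{-1}\cdot awa=waw^{-1}a^{-1}\cdot awa=wa^2$ together with $P_{wa^2}=xz-y$ yields $t_0=(xz-y)(2-P_K)$. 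The same trick applied to the expression for $t_1$ gives
\[
t_1=(1-P_K)\,P_{u\cdot awa}+P_{u\cdot wa^2}.
\]

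For $t_1$ I would then substitute $u=v^{1-m}w$: after the simplifications $w\cdot awa=wawa$ and $w\cdot wa^2=w^2a^2$, the two traces take the form $P_{v^{1-m}Y}$, and a second application of \eqref{Cayley} with $A=v$ shows each sequence $k\mapsto P_{v^kY}$ obeys the Chebyshev recurrence with parameter $\beta$. Lemma \ref{chev}, together with $S_{-k}(\beta)=-S_{k-2}(\beta)$, reduces $t_1$ to a linear combination of the initial values $\chi_0:=(1-P_K)P_{wawa}+P_{w^2a^2}$ and $\chi_1:=(1-P_K)P_{(awa)^2}+P_{(aw)^2a^2}$. Elementary evaluations $P_{wawa}=z^2-2$, $P_{(awa)^2}=(xz-y)^2-2$, $P_{w^2a^2}=xyz-x^2-y^2+2$, and $P_{(aw)^2a^2}=x^2z^2-x^2-xyz-z^2+2$ make the polynomials collapse to the clean factored forms $\chi_0=(2-P_K)(z^2-1)$ and $\chi_1=(2-P_K)((xz-y)^2-1)$. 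Using the identity $y(xz-y)+1-\beta=z^2-1$ (immediate from $\beta=xyz+2-y^2-z^2$) together with Proposition \ref{alphabeta}, one rewrites the result as $t_1=(2-P_K)\bigl[(xz-y)\alpha-(S_m(\beta)-S_{m-1}(\beta))\bigr]$; substituting $t_0$ and $t_1$ into the formula for $t_{n-1}$ and applying $\alpha S_{n-2}(\alpha)-S_{n-3}(\alpha)=S_{n-1}(\alpha)$ produces the claimed identity. The main obstacle is the $t_1$ step: not the nested recurrences themselves, but the polynomial bookkeeping needed to see that $\chi_0$ and $\chi_1$ really carry the common factor $2-P_K$, which is precisely what makes the abelian factor split off from the whole difference.
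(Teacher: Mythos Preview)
Your argument is correct, and it takes a genuinely different route from the paper's.

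The paper computes $P_{raw}$ and $P_{\overleftarrow{r}aw}$ \emph{separately}. It rewrites $raw=u^{n}\,w^{-1}(awaw^{-1})^{m}w$ and $\overleftarrow{r}aw=a^{-1}w^{-1}(awaw^{-1})^{m}u^{n}aw$, applies Lemma~\ref{chev} in $n$ to each to get expressions in $S_{n-1}(\alpha),S_{n-2}(\alpha)$, then expands the $m$-dependent coefficients (computing, e.g., $P_{a^{-1}w^{-1}awaaw}=(xz-y)(x^{2}-\beta-1)$ and $P_{a^{-1}w^{-1}awaw^{-1}aw}=\beta(x^{2}-\beta)-(x^{2}-2)$). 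Only after subtracting the two full expressions does the factor $\beta+2-x^{2}=xyz+4-x^{2}-y^{2}-z^{2}$ emerge by cancellation.

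You instead take the difference \emph{first}, setting $t_{k}=P_{u^{k}awa}-P_{K\,awa\,u^{k}}$ with $K=[a,w]$, and exploit the group-theoretic cancellation $K^{-1}\cdot awa=wa^{2}$ to show directly that $t_{0}=(2-P_{K})(xz-y)$. The same trick applied to $t_{1}$, followed by the $\beta$-recurrence, makes the factor $2-P_{K}$ appear already in the initial data $\chi_{0},\chi_{1}$ before any large subtraction. This is conceptually cleaner: the abelian factor is explained by the commutator identity rather than recovered from a polynomial cancellation. The price is the extra bookkeeping you flag --- verifying $\chi_{0}=(2-P_{K})(z^{2}-1)$ and $\chi_{1}=(2-P_{K})((xz-y)^{2}-1)$ --- and one further Chebyshev shift $\alpha S_{n-2}(\alpha)-S_{n-3}(\alpha)=S_{n-1}(\alpha)$ at the end, whereas the paper's indexing lands on $S_{n-1},S_{n-2}$ immediately. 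Both computations are of comparable length.
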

\begin{proof} By applying the identity \eqref{Cayley} and Lemma \ref{chev} we have
\begin{eqnarray*}
P_{raw} &=& P_{u^{n-1}awa}=P_{u^{n}w^{-1}(awaw^{-1})^mw} \\
&=& P_{awa}S_{n-1}(\alpha)-P_{w^{-1}(awaw^{-1})^mw}S_{n-2}(\alpha)\\
&=& (xz-y)S_{n-1}(\alpha)-(\beta S_{m-1}(\beta)-2S_{m-2}(\beta))S_{n-2}(\alpha),\\
P_{\overleftarrow{r}aw} &=& P_{a^{-1}w^{-1}awau^{n-1}aw}= P_{a^{-1}w^{-1}(awaw^{-1})^m u^{n}aw}\\
&=& P_{a^{-1}w^{-1}awaaw}S_{n-1}(\alpha)-P_{a^{-1}w^{-1}(awaw^{-1})^m aw}S_{n-2}(\alpha)\\
&=& P_{a^{-1}w^{-1}awaaw}S_{n-1}(\alpha)-(P_{a^{-1}w^{-1}awaw^{-1} aw}S_{m-1}(\beta)-P_{a^{-1}w^{-1}aw}S_{m-2}(\beta))S_{n-2}(\alpha)
\end{eqnarray*}
where 
\begin{eqnarray*}
P_{a^{-1}w^{-1}awaaw}  &=& P_{awa}P_{a^{-1}w^{-1}aw}-P_{a^{-1}w^{-1}(awa)^{-1}aw}\\
&=&P_{awa}(P_aP_{w^{-1}aw}-P_{aw^{-1}aw})-P_{a^{-1}w^{-1}a^{-1}}\\
&=&(xz-y)(x^2-\beta-1),\\
P_{a^{-1}w^{-1}awaw^{-1} aw} &=& P_{awaw^{-1}}P_{a^{-1}w^{-1}aw}-P_{a^{-1}w^{-1}(awaw^{-1})^{-1}aw}\\
&=&P_{awaw^{-1}}(P_aP_{w^{-1}aw}-P_{aw^{-1}aw})-P_{a^{-2}}\\
&=&\beta(x^2-\beta)-(x^2-2).
\end{eqnarray*}
Hence 
\begin{eqnarray*}
P_{\overleftarrow{r}aw} &=& (xz-y)(x^2-\beta-1)S_{n-1}(\alpha)\\
&&-\,\left((\beta(x^2-\beta)-(x^2-2))S_{m-1}(\beta)-(x^2-\beta)S_{m-2}(\beta)\right)S_{n-2}(\alpha),
\end{eqnarray*}
and so
\begin{eqnarray*}
P_{raw}-P_{\overleftarrow{r}aw} &=&(\beta+2-x^2)[(xz-y)S_{n-1}(\alpha)-((\beta -1)S_{m-1}(\beta)-S_{m-2}(\beta))S_{n-2}(\alpha)]\\
&=& (\beta+2-x^2)[(xz-y)S_{n-1}(\alpha)-(S_{m}(\beta)-S_{m-1}(\beta))S_{n-2}(\alpha)].
\end{eqnarray*}
This proves the proposition since $\beta+2-x^2=xyz+4-x^2-y^2-z^2$.
\end{proof}

Part (i) of Theorem \ref{pretzel} follows from Propositions \ref{alphabeta} and \ref{P}.

\subsection{Proof of part (ii)} Recall from Proposition \ref{alphabeta} that $\alpha = yS_{m-1}(\beta)-(xz-y)S_{m-2}(\beta)$ and $\beta = xyz+2-y^2-z^2$. Let $$Q(x,y,z)=(xz-y)S_{n-1}(\alpha)-(S_{m}(\beta)-S_{m-1}(\beta))S_{n-2}(\alpha).$$ Then, by Proposition \ref{P}, $P_{raw}-P_{\overleftarrow{r}aw} = (xyz+4-x^2-y^2-z^2)Q(x,y,z).$

\begin{proposition}
One has
$$Q(x,y,0)=(-1)^{(m-1)(n-1)} \, S_{2mn-2m-n-2}(y).$$
\label{z=0}
\end{proposition}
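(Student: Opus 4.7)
The plan is to specialize to $z=0$, where $\beta$ and $\alpha$ collapse to polynomials in $y$ alone, and then use the trigonometric substitution $y=2\cos\theta$ to convert every Chebyshev evaluation into a sine ratio, after which the identity reduces to a product-to-sum computation.

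First I would observe that at $z=0$ the formulas from Proposition \ref{alphabeta} give $\beta=2-y^2$ and $\alpha=y\bigl(S_{m-1}(\beta)+S_{m-2}(\beta)\bigr)$, so
\begin{equation*}
Q(x,y,0) = -y\,S_{n-1}(\alpha) - \bigl(S_m(\beta)-S_{m-1}(\beta)\bigr)\,S_{n-2}(\alpha),
\end{equation*}
which is manifestly independent of $x$ (as the statement demands). The engine of the proof is the substitution $y=2\cos\theta$, for which $S_k(y)=\sin((k+1)\theta)/\sin\theta$. Under this substitution $\beta=2-4\cos^2\theta=-2\cos(2\theta)=2\cos(\pi-2\theta)$, and a direct application of $S_k(2\cos\psi)=\sin((k+1)\psi)/\sin\psi$ with $\psi=\pi-2\theta$, combined with $\sin((2k+2)\theta)=\sin\theta\,S_{2k+1}(y)$ and $\sin(2\theta)=y\sin\theta$, yields the key identity
\begin{equation*}
S_k(\beta)=(-1)^k\,S_{2k+1}(y)/y.
\end{equation*}

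Plugging this into the expressions for $\alpha$ and $S_m(\beta)-S_{m-1}(\beta)$, and then using the telescoping identities $S_{2m-1}(y)-S_{2m-3}(y)=2\cos((2m-1)\theta)$ and $S_{2m+1}(y)+S_{2m-1}(y)=y\,S_{2m}(y)$, I would obtain the clean formulas $\alpha=2(-1)^{m-1}\cos((2m-1)\theta)$ and $S_m(\beta)-S_{m-1}(\beta)=(-1)^m S_{2m}(y)$. Writing $\alpha=2\cos\phi$ with $\phi=(2m-1)\theta+(m-1)\pi$ and applying $S_{k}(2\cos\phi)=\sin((k+1)\phi)/\sin\phi$, I then extract the overall sign $(-1)^{(n-1)(m-1)}$ from $\sin(j\phi)/\sin\phi=(-1)^{(j-1)(m-1)}\sin(j(2m-1)\theta)/\sin((2m-1)\theta)$.

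After factoring out $(-1)^{(n-1)(m-1)}$, the claim reduces to the trigonometric identity
\begin{equation*}
-\sin(2\theta)\sin\bigl(n(2m{-}1)\theta\bigr)+\sin\bigl((2m{+}1)\theta\bigr)\sin\bigl((n{-}1)(2m{-}1)\theta\bigr)=\sin\bigl((2mn{-}2m{-}n{-}1)\theta\bigr)\sin\bigl((2m{-}1)\theta\bigr),
\end{equation*}
which I would verify by applying $2\sin A\sin B=\cos(A-B)-\cos(A+B)$ to each of the three products: on the left the two $\cos((2mn-n+2)\theta)$ terms cancel, leaving $\cos((2mn-4m-n)\theta)-\cos((2mn-n-2)\theta)$, which is exactly what the right side produces. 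The main obstacle, as always in this sort of calculation, is bookkeeping of the parity signs through the substitutions $\beta=2\cos(\pi-2\theta)$ and $\alpha=2\cos((2m-1)\theta+(m-1)\pi)$; once those are in hand, the trigonometric identity itself is a routine product-to-sum expansion.
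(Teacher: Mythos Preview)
Your proof is correct and follows essentially the same route as the paper: specialize to $z=0$, use the closed form for $S_k$ to evaluate $\beta$, $\alpha$, $S_m(\beta)-S_{m-1}(\beta)$, and then $S_{n-1}(\alpha)$, $S_{n-2}(\alpha)$, and simplify. The only difference is cosmetic---the paper uses the Laurent substitution $y=a+a^{-1}$ and manipulates expressions like $\dfrac{(-a^2)^m-(-a^{-2})^m}{(-a^2)-(-a^{-2})}$ directly, whereas you use the equivalent trigonometric substitution $y=2\cos\theta$ (i.e.\ $a=e^{i\theta}$) and finish with a product-to-sum identity; the two computations are the same argument in different notation.
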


\begin{proof}
Fix $z=0$. Then we have $\beta=2-y^2,~\alpha=y(S_{m-1}(\beta)+S_{m-2}(\beta))$ and 
$$Q=-[yS_{n-1}(\alpha)+(S_{m}(\beta)-S_{m-1}(\beta))S_{n-2}(\alpha)].$$
Let $y=a+a^{-1}.$ Then $\beta=-a^2-a^{-2}$ and so
\begin{eqnarray*}
\alpha &=& y(S_{m-1}(\beta)+S_{m-2}(\beta))\\
&=& (a+a^{-1})\left(\frac{(-a^2)^m-(-a^{-2})^m}{(-a^2)-(-a^{-2})}+\frac{(-a^2)^{m-1}-(-a^{-2})^{m-1}}{(-a^2)-(-a^{-2})}\right)\\
&=& (-1)^{m-1}(a^{2m-1}+a^{1-2m}).
\end{eqnarray*}
Hence 
\begin{eqnarray*}
-Q &=& yS_{n-1}(\alpha)+(S_{m}(\beta)-S_{m-1}(\beta))S_{n-2}(\alpha)\\
&=& (a+a^{-1})\frac{((-1)^{m-1}a^{2m-1})^n-((-1)^{m-1}a^{1-2m})^n}{(-1)^{m-1}a^{2m-1}-(-1)^{m-1}a^{1-2m}}\\
&&+\,\left(\frac{(-a^2)^{m+1}-(-a^{-2})^{m+1}}{(-a^2)-(-a^{-2})}-\frac{(-a^2)^{m}-(-a^{-2})^{m}}{(-a^2)-(-a^{-2})}\right)\\
&&\times \, \frac{((-1)^{m-1}a^{2m-1})^{n-1}-((-1)^{m-1}a^{1-2m})^{n-1}}{(-1)^{m-1}a^{2m-1}-(-1)^{m-1}a^{1-2m}}\\
&=&(-1)^{(m-1)(n-1)}(a+a^{-1})\frac{a^{(2m-1)n}-a^{(1-2m)n}}{a^{2m-1}-a^{1-2m}}\\
&&+\,(-1)^{m+(m-1)(n-2)}\frac{a^{2m+1}-a^{-(2m+1)}}{a-a^{-1}} \times \frac{a^{(2m-1)(n-1)}-a^{(1-2m)(n-1)}}{a^{2m-1}-a^{1-2m}}\\
&=&(-1)^{(m-1)(n-1)}\frac{a^{-2mn+2m+n+1}-a^{2mn-2m-n-1}}{a-a^{-1}}\\
&=&(-1)^{(m-1)(n-1)+1} \, S_{2mn-2m-n-2}(y).
\end{eqnarray*}
The proposition follows.
\end{proof}

For two polynomials $f,g$ in $\BC[x,y,z]$, we say that they are \emph{$y$-equal}, and write
$$f =_y g$$
if their $y$-degrees are equal and the coefficients of their highest powers in $y$ are also equal. 

\begin{proposition}
One has
$$Q(x,y,z) =_y \begin{cases}  (-1)^{(m-1)(n-1)} z^2y^{2mn-2m-n} \quad & \emph{if} \quad m \ge 2 \emph{~and~} n \ge 2,\\
-(-1)^{(m-1)(n-1)} y^{-2mn+2m+n} \quad & \emph{if} \quad m \ge 2 \emph{~and~} n \le 1,\\
z^2 y^{n-2} \quad & \emph{if} \quad m=1 \emph{~and~} n \ge 3,\\
z^2-1 & \emph{if} \quad m=1 \emph{~and~} n=2,\\
-y^{2-n} & \emph{if} \quad m=1 \emph{~and~} n \le 1,\\
(-1)^n y^{n} & \emph{if} \quad m=0 \emph{~and~} n \ge 0,\\
0 & \emph{if} \quad m=0 \emph{~and~} n=-1,\\
(-1)^{n-1}y^{-(n+2)} & \emph{if} \quad m=0 \emph{~and~} n \le -2,\\
-(-1)^{(m-1)(n-1)}y^{-2mn+2m+n} \quad & \emph{if} \quad m \le -1 \emph{~and~} n \ge 1,\\
(-1)^{(m-1)(n-1)} y^{2mn-2m-n-2} \quad & \emph{if} \quad m \le -1 \emph{~and~} n \le 0.
             \end{cases}$$
             \label{degree}
\end{proposition}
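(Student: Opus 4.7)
The plan is to systematically track $y$-degrees and leading $y$-coefficients through the chain $\beta \to \alpha \to (S_{n-1}(\alpha), S_{n-2}(\alpha)) \to Q$, then compare the two summands of $Q$ case by case. Writing $\beta = -y^2 + xz \cdot y + (2-z^2)$, a direct induction from the Chebyshev recursion, together with the identity $S_{-k}(\gamma) = -S_{k-2}(\gamma)$ for $k \ge 1$, gives $S_k(\beta) =_y (-1)^k y^{2k}$ for $k \ge 0$ and $S_k(\beta) =_y (-1)^{k+1} y^{-2k-4}$ for $k \le -2$, while $S_{-1}(\beta) = 0$. Feeding these into $\alpha = yS_{m-1}(\beta) - (xz-y)S_{m-2}(\beta)$ yields $\alpha =_y (-1)^{m-1}y^{2m-1}$ for $m \ge 2$, $\alpha = y$ for $m = 1$, $\alpha = xz-y$ (so $\alpha =_y -y$) for $m = 0$, and $\alpha =_y (-1)^{m-1}y^{1-2m}$ for $m \le -1$. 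The same calculus then produces the $y$-leading data of $S_{n-1}(\alpha)$, $S_{n-2}(\alpha)$, and $S_m(\beta) - S_{m-1}(\beta)$.

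With these building blocks the proof splits into the ten stated cases. In the seven $z$-free cases the two summands $(xz-y)S_{n-1}(\alpha)$ and $(S_m(\beta) - S_{m-1}(\beta))S_{n-2}(\alpha)$ turn out to have strictly different $y$-degrees, so the formula is read off directly from the dominant summand. Several subcases ($n \in \{-1,0,1\}$ or $m = 0$) degenerate further because some $S_j(\alpha)$ with $j \in \{-1,-2\}$ equals $0$ or $\pm 1$, and these reduce to a one-line substitution; for instance $m = 0, n = -1$ gives $Q = (xz-y)(-1) - 1 \cdot (-\alpha) = -xz + y + (xz-y) = 0$, matching the stated vanishing.

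The main obstacle is the three cases producing the $z^2$ factor, namely $m \ge 2, n \ge 2$, $m = 1, n \ge 3$, and $m = 1, n = 2$. In each of these the two summands have identical top $y$-degree and their leading $y$-coefficients cancel exactly by a signed check, so the $y$-leading term of $Q$ is determined entirely by sub-leading contributions. To extract them I would use the refined expansion $\beta = -y^2 + xz \cdot y + (2-z^2)$ together with the closed form $S_k(\gamma) = \sum_{j \ge 0}(-1)^j \binom{k-j}{j}\gamma^{k-2j}$, tracking the top three $y$-powers of $\beta^k$ (hence of $S_{m-1}(\beta)$, $S_{m-2}(\beta)$, and $S_m(\beta)$), propagating them through $\alpha$, and finally through $S_{n-1}(\alpha)$ and $S_{n-2}(\alpha)$ to the same depth. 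The check is then that the $xz$-linear corrections to $\beta$ cancel between the two summands in the $y^{\deg-1}$ coefficient of $Q$, while the $(2-z^2)$ and $x^2z^2$ corrections combine in the $y^{\deg-2}$ coefficient into exactly $(-1)^{(m-1)(n-1)}z^2$. The case $m = 1$ is a pleasant warm-up because $\alpha = y$ is $z$-independent and the $z^2$ arises solely from $\beta - 1 = xyz + 1 - y^2 - z^2$; the general $m \ge 2, n \ge 2$ case is the same computation but with $m-1$ and $n-1$ treated symbolically, and this bookkeeping of sub-leading corrections through the nested Chebyshev expansions is the principal technical effort of the proof.
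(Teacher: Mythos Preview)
Your overall architecture matches the paper's: both first establish the $y$-leading behavior of $\beta$, of $S_k(\beta)$, of $\alpha$, and of $S_m(\beta)-S_{m-1}(\beta)$ (the paper packages this as a lemma, exactly the formulas you wrote down), and both then treat the cases $m=0$ and $m\le -1$ by a straight comparison of the $y$-degrees of the two summands of $Q$. Your handling of those cases, including the direct substitutions for degenerate $n$, is essentially identical to the paper's.

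The genuine divergence is in the cases $m\ge 1$ where a $z^2$ appears. You propose to keep the original expression $Q=(xz-y)S_{n-1}(\alpha)-(S_m(\beta)-S_{m-1}(\beta))S_{n-2}(\alpha)$ and brute-force the top three $y$-coefficients of each summand via the binomial/Chebyshev closed form, checking that the degree-$1$ drop cancels and the degree-$2$ drop assembles into $z^2$. That does work, but it is the hard way. The paper instead performs a one-shot algebraic rewriting: using $S_m(\beta)-S_{m-1}(\beta)=(\beta-1)S_{m-1}(\beta)-S_{m-2}(\beta)$, then eliminating $yS_{m-1}(\beta)$ via the defining relation $\alpha=yS_{m-1}(\beta)-(xz-y)S_{m-2}(\beta)$ and applying the Chebyshev recursion in $\alpha$, it obtains
\[
Q=\delta\,S_{n-2}(\alpha)-(xz-y)S_{n-3}(\alpha),\qquad \delta:=z^2S_{m-1}(\beta)+(xyz-x^2z^2+z^2-1)S_{m-2}(\beta)+S_{m-3}(\beta).
\]
In this rewritten form the two summands again have \emph{distinct} $y$-degrees for every $m\ge 1$, and the $z^2$ sits visibly as the leading coefficient of $\delta$; no sub-leading bookkeeping is needed at all. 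So your approach is correct but trades a short algebraic identity for a substantially heavier coefficient computation; the paper's rewriting is worth knowing because it makes the $z^2$ appear structurally rather than numerically.
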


\begin{proof} We first prove the following result

\begin{lemma}
One has 

(i) $\alpha =_y (-1)^{m-1}y^{|2m-1|}$.

(ii) $S_{m}(\beta)-S_{m-1}(\beta) =_y \begin{cases}  (-1)^{m}y^{2m} \quad & \emph{if} \quad m \ge 0,\\
(-1)^{m-1}y^{-2(m+1)} & \emph{if} \quad m \le -1.
             \end{cases} $
\label{alpha}
\end{lemma}

\begin{proof}

(i) Note that $\beta =_y -y^2$. If $m \ge 2$ then 
$$S_{m-1}(\beta)=_y S_{m-1}(-y^2)=_y(-y^2)^{m-1}=(-1)^{m-1}y^{2m-2}.$$
Similarly $S_{m-2}(\beta)=_y (-1)^{m-2}y^{2m-4}.$ Hence
$$\alpha = yS_{m-1}(\beta)-(xz-y)S_{m-2}(\beta)=_y (-1)^{m-1}y^{2m-1}.$$
If $m=1$ then $\alpha=y$. If $m=0$ then $\alpha = xz-y$. If $m \le -1$ then let $m'=-(m+1) \ge 0$. Note that $S_k(\gamma)=-S_{-k-2}(\gamma)$ for all integers $k$. Hence
$$S_{m-1}(\beta) = -S_{-m-1}(\beta) = -S_{m'}(\beta) =_y -S_{m'}(-y^2)=-(-1)^{m'}y^{2m'}=(-1)^{m}y^{-2(m+1)}.$$
Similarly $S_{m-2}(\beta)=-S_{m'+1}(\beta) =_y (-1)^{m-1}y^{-2m}$.
Hence
$$\alpha = yS_{m-1}(\beta)-(xz-y)S_{m-2}(\beta)=_y (-1)^{m-1}y^{1-2m}.$$

(ii) Similar to (i).
\end{proof}

\subsubsection{The case $m=0$} Then $\alpha = xz-y$ and so
\begin{eqnarray*}
Q &=& (xz-y)S_{n-1}(xz-y)-S_{n-2}(xz-y)\\
&=& S_{n}(xz-y) =_y 
\begin{cases}  (-1)^n y^{n} \quad & \text{if} \quad n \ge 0,\\
0 & \text{if} \quad n=-1,\\
(-1)^{n-1}y^{-(n+2)} & \text{if} \quad n \le -2.
             \end{cases}
             \end{eqnarray*}

\subsubsection{The case $m \le -1$} Then, by Lemma \ref{alpha}, $\alpha =_y (-1)^{m-1}y^{1-2m}$ and $S_{m}(\beta)-S_{m-1}(\beta) =_y (-1)^{m-1}y^{-2(m+1)}$. If $n \ge 2$ then 
\begin{eqnarray*}
S_{n-1}(\alpha) &=_y& ((-1)^{m-1}y^{1-2m})^{n-1}=(-1)^{(m-1)(n-1)} y^{-2mn+2m+n-1},\\
S_{n-2}(\alpha) &=_y& ((-1)^{m-1}y^{1-2m})^{n-2}=(-1)^{(m-1)(n-2)} y^{-2mn+4m+n-2}.
\end{eqnarray*}
It follows that
\begin{eqnarray*}
(xz-y)S_{n-1}(\alpha) &=_y& -(-1)^{(m-1)(n-1)} y^{-2mn+2m+n},\\
(S_{m}(\beta)-S_{m-1}(\beta))S_{n-2}(\alpha) &=_y& (-1)^{m-1}y^{-2(m+1)}(-1)^{(m-1)(n-2)} y^{-2mn+4m+n-2}\\
&=& (-1)^{(m-1)(n-1)} y^{-2mn+2m+n-4}.
\end{eqnarray*}
Hence $Q=(xz-y)S_{n-1}(\alpha)-(S_{m}(\beta)-S_{m-1}(\beta))S_{n-2}(\alpha) =_y -(-1)^{(m-1)(n-1)} y^{-2mn+2m+n}.$

If $n=1$ then $Q=xz-y$. If $n=0$ then $Q=S_{m}(\beta)-S_{m-1}(\beta)=_y(-1)^{m-1}y^{-2(m+1)}.$ Similarly, if $n \le -1$ then $Q=_y(-1)^{(m-1)(n-1)} y^{2mn-2m-n-2}$. Hence 
$$Q =_y \begin{cases}  -(-1)^{(m-1)(n-1)}y^{-2mn+2m+n} \quad & \text{if} \quad m \le -1 \text{~and~} n \ge 1,\\
(-1)^{(m-1)(n-1)} y^{2mn-2m-n-2} \quad & \text{if} \quad m \le -1 \text{~and~} n \le 0.
             \end{cases}$$

If $m \ge 1$ then we write 
\begin{eqnarray*}
Q &=&  (xz-y)S_{n-1}(\alpha)-((\beta -1) S_{m-1}(\beta)-S_{m-2}(\beta))S_{n-2}(\alpha)\\
&=& (xz-y)S_{n-1}(\alpha)-((xyz+1-y^2-z^2)S_{m-1}(\beta)-S_{m-2}(\beta))S_{n-2}(\alpha)\\
&=& (xz-y)(S_{n-1}(\alpha)-yS_{m-1}(\beta)S_{n-2}(\alpha))+((z^2-1)S_{m-1}(\beta)+S_{m-2}(\beta))S_{n-2}(\alpha)\\
&=& (xz-y)(S_{n-1}(\alpha)-(\alpha+(xz-y)S_{m-2}(\beta))S_{n-2}(\alpha))\\
&& + \, ((z^2-1)S_{m-1}(\beta)+S_{m-2}(\beta))S_{n-2}(\alpha)\\
&=&-(xz-y)(S_{n-3}(\alpha)+(xz-y)S_{m-2}(\beta)S_{n-2}(\alpha))\\
&& + \, ((z^2-1)S_{m-1}(\beta)+S_{m-2}(\beta))S_{n-2}(\alpha)\\
&=&[(z^2-1)S_{m-1}(\beta)-((xz-y)^2-1)S_{m-2}(\beta)]S_{n-2}(\alpha)-(xz-y)S_{n-3}(\alpha)\\
&=&[(z^2-1)S_{m-1}(\beta)-(-xyz+x^2z^2-z^2+1-\beta)S_{m-2}(\beta)]S_{n-2}(\alpha)-(xz-y)S_{n-3}(\alpha)\\
&=&[z^2 S_{m-1}(\beta)+(xyz-x^2z^2+z^2-1)S_{m-2}(\beta)+S_{m-3}(\beta)]S_{n-2}(\alpha)-(xz-y)S_{n-3}(\alpha)
\end{eqnarray*}
Let $\delta=z^2 S_{m-1}(\beta)+(xyz-x^2z^2+z^2-1)S_{m-2}(\beta)+S_{m-3}(\beta).$ Then
$$Q=\delta S_{n-2}(\alpha)-(xz-y)S_{n-3}(\alpha).$$

\begin{lemma} One has
$$\delta =_y \begin{cases}  (-1)^{m-1}z^2y^{2m-2} \quad & \emph{if} \quad m \ge 2,\\
z^2-1 & \emph{if} \quad m=1,\\
(-1)^{m}y^{2-2m} & \emph{if} \quad m \le 0.
             \end{cases} $$
             \label{gamma}
\end{lemma}

\subsubsection{The case $m=1$} In this case $\alpha=y$ and so $$Q=(z^2-1)S_{n-2}(y)-(xz-y)S_{n-3}(y)=z^2S_{n-2}(y)-xzS_{n-3}(y)+S_{n-4}(y).$$
Hence
$$Q =_y \begin{cases}  z^2y^{n-2} \quad & \text{if} \quad n \ge 3,\\
z^2-1 & \text{if} \quad n=2,\\
-y^{2-n} & \text{if} \quad n \le 1.
             \end{cases} $$
             
\subsubsection{The case $m \ge 2$} Then, by Lemmas \ref{alpha} and \ref{gamma}$, \alpha =_y (-1)^{m-1}y^{2m-1}$ and $\delta =_y (-1)^{m-1}z^2y^{2m-2}$. By similar arguments as in the case $m \le -1$, we obtain 
$$Q =_y \begin{cases}  (-1)^{(m-1)(n-1)} z^2y^{2mn-2m-n} \quad & \text{if} \quad m \ge 2 \text{~and~} n \ge 2,\\
-(-1)^{(m-1)(n-1)} y^{-2mn+2m+n} \quad & \text{if} \quad m \ge 2 \text{~and~} n \le 1.
             \end{cases}$$
This completes the proof of Proposition \ref{degree}.
\end{proof}

From Propositions \ref{z=0} and \ref{degree}, we have

(i) If $Q(x,y,z)$ has non-trivial repeated factors then so has $Q(0,y,z)$. Moreover, if $R(y,z)$ is a non-trivial repeated factor of $Q(0,y,z)$ then the coefficient of the highest power of $y$ in $R(y,z)$ is a divisor of $z$.

(ii) The difference of the $y$-degrees of $Q(0,y,z)$ and $Q(0,y,0)=\pm S_{2mn-2m-n-2}(y)$ is at most 2.

Let us now prove part (ii) of Theorem \ref{pretzel}. The goal is to show that $$P_{raw}-P_{\overleftarrow{r}aw} = (xyz+4-x^2-y^2-z^2)Q(x,y,z)$$ does not have any non-trivial repeated factors.

Suppose that $Q(x,y,z)$ has non-trivial repeated factors. Then $Q(0,y,z)$ also has non-trivial repeated factors. Let $R(y,z)$ be a non-trivial repeated factor of $Q(0,y,z)$. Note that the coefficient of the highest power of $y$ in $R(y,z)$ is a divisor of $z$. If $R$ has $y$-degree 0 then $R=\pm z$. It implies that $z$ is a divisor of $Q(0,y,z)$ and so $Q(0,y,0)=\pm S_{2mn-2m-n-2}(y)=0.$ Hence $2mn-2m-n-2=-1$, i.e. ($m=0$ and $n=-1$) or ($m=1$ and $n=3$). If $m=0$ and $n=-1$ then $\alpha=xz-y$ and so $Q(x,y,z)=0$. If $m=1$ and $n=3$ then $\alpha=y$ and so $Q(x,y,z) = z(zy-x)$ does not have any non-trivial repeated factors. 

We consider the case that $R$ has $y$-degree $k \ge 1$. Let $r_k$ be the coefficient of $y^k$ in $R(y,z)$. If $r_k= \pm 1$ then $R(y,0)$ is a non-trivial repeated factor of $Q(0,y,0)=\pm S_{2mn-2m-n-2}(y)$. This is impossible since $S_{2mn-2m-n-2}(y)$ does not have any non-trivial repeated factors. Hence $r_k= \varepsilon z$, where $\varepsilon=\pm 1$, and so $R(y,z)=\varepsilon zy^k+r_{k-1}y^{k-1}+\cdots.$ Since the difference of the $y$-degrees of $Q(0,y,z)$ and $Q(0,y,0)$ is at most 2, the $y$-degree of $R(y,0)$ is exactly $k-1$. If $k \ge 2$ then $R(y,0)$ is a non-trivial repeated factor of $Q(0,y,0)=\pm S_{2mn-2m-n-2}(y)$, which is impossible. Hence $k=1$ and so $R(y,z)=\varepsilon zy+r_{0}(z)$ where $r_0(0) \not= 0.$ We have 
$$Q(0,y,z)=-[yS_{n-1}(\alpha \mid_{x=0})+(S_{m}(\beta\mid_{x=0})-S_{m-1}(\beta\mid_{x=0}))S_{n-2}(\alpha\mid_{x=0})]$$ 
where $\beta\mid_{x=0} \, =2-y^2-z^2$ and $\alpha\mid_{x=0} \,= y[S_{m-1}(\beta\mid_{x=0})+S_{m-2}(\beta\mid_{x=0})].$ It implies that $Q(0,y,z)$ contains even powers of $z$ only. Since $R(y,z)=\varepsilon z+r_0(z)$ is a non-trivial repeated factor of $Q(0,y,z)$, so is $R(y,-z)=\varepsilon (-z)+r_0(-z)$. If $R(y,-z) \not= -R(y,z)$, then $R(y,z)$ and $R(y,-z)$ are distinct non-trivial repeated factors in the prime factorization of $Q(0,y,z)$ in the UFD $\BC[y,z]$. It implies that the difference of the $y$-degrees of $Q(0,y,z)$ and $Q(0,y,0)$ is at least 4, a contradiction. Hence $R(y,-z)=-R(y,z)$, which means that $r_0(-z)=-r_{0}(z)$, i.e. $r_0(z)$ is an odd polynomial in $z$. This contradicts the condition that $r_0(0) \not= 0.$ Therefore $Q(x,y,z)$ does not have any non-trivial repeated factors. 

It remains to show that $xyz+4-x^2-y^2-z^2$ is not a divisor of $Q(x,y,z)$ unless $Q(x,y,z) \equiv 0$. From Proposition \ref{degree}, it is easy to see that $Q(x,y,z) \equiv 0$ if and only if $m=0$ and $n=-1$. Suppose $Q(x,y,z) \not\equiv 0$. If $m=1$ and $n=3$ then $Q(x,y,z)=z(zy-x)$. Otherwise $Q(x,y,0)=\pm S_{2mn-2m-n-2}(y)\not\equiv 0$ is not divisible by $4-x^2-y^2$. It implies that $Q(x,y,z)$ is not divisible by $xyz+4-x^2-y^2-z^2$. Therefore $P_{raw}-P_{\overleftarrow{r}aw} = (xyz+4-x^2-y^2-z^2)Q(x,y,z)$ does not have any non-trivial repeated factors and so the universal character ring of the $(-2,2m+1,2n)$-pretzel link is reduced for all integers $m$ and $n$.

\end{document}